\begin{document}
\title{Measure doubling in unimodular locally compact groups and quotients}

\author{Zuxiang Kong}
\address{Department of Mathematics, National University of Singapore, Singapore}
\email{zuxiang\_kong@u.nus.edu}

\author{Fei Peng}
\address{Department of Mathematics, National University of Singapore, Singapore}
\email{pfpf@u.nus.edu}

\author{Chieu-Minh Tran}
\address{Department of Mathematics, National University of Singapore, Singapore}
\email{trancm@nus.edu.sg}

\subjclass[2020]{Primary 22D05; Secondary 51F99, 22E30, 03C99, 11B30}

\date{}

\newtheorem{theorem}{Theorem}[section]
\newtheorem{lemma}[theorem]{Lemma}
\newtheorem{corollary}[theorem]{Corollary}
\newtheorem{fact}[theorem]{Fact}
\newtheorem{conjecture}[theorem]{Conjecture}
\numberwithin{equation}{theorem}

\newtheorem{proposition}[theorem]{Proposition}
\theoremstyle{definition}
\newtheorem{remark}[theorem]{Remark}
\newtheorem{definition}[theorem]{Definition}
\newtheorem*{thm:associativity}{Theorem \ref{thm:associativity}}
\newtheorem*{thm:associativity2}{Theorem \ref{thm:associativity2}}
\newtheorem*{thm:associativity3}{Theorem \ref{thm:associativity3}}
\def\tri{\,\triangle\,}

\def\d{\,\mathrm{d}}
\def\BM{\mathrm{BM}}
\def\RR{\mathbb{R}}
\def\ZZ{\mathbb{Z}}
\def\QQ{\mathbb{Q}}
\def\TT{\mathbb{T}}
\def\LL{\mathscr{L}}
\def\MM{\mathcal{M}}
\def\II{\mathcal{I}}
\def\id{\mathrm{id}}
\def\tmu{\tilde{\mu}}
\def\pr{\mathrm{p}}
\newcommand\NN{\mathbb N}
\newcommand{\Case}[2]{\noindent {\bf Case #1:} \emph{#2}}
\newcommand\inner[2]{\langle #1, #2 \rangle}
\newcommand{\ign}[1]{}
\newcommand{\glz}{\mathrm{GL}_2(\ZZ)}
\newcommand{\eq}[1]{\begin{align*}#1\end{align*}}
\def\aL{\mathfrak{l}}
\def\aee{=_\ult}

\renewcommand{\epsilon}{\varepsilon}

\def\ult{\mathcal{U}}
\def \Sot{\mathrm{SO}(3,\RR)}
\def \Sod{\mathrm{SO}(d, \RR)}
\def \Gld{\mathrm{GL}(d, \RR)}

\def\cA{\mathscr{A}}
\def\cB{\mathcal{B}}
\def\cP{\mathcal{P}}
\def\cD{\Sigma}
\def\cAe{\mathscr{A}_{\epsilon}}
\def\cAet{\mathscr{A}_{\epsilon, \theta}}

\def \diam{\mathrm{diam}}

\def\EE{\mathbb{E}}

\def \fL {\mathfrak{l}}

\begin{abstract}
We consider a (possibly discrete) unimodular locally compact group $G$ with Haar measure $\mu_G$, and a compact $A\subseteq G$ of positive measure with $\mu_G(A^2)\leq K\mu_G(A)$. Let $H$ be a closed normal subgroup of G and $\pi: G \rightarrow G/H$ be the quotient map. With the further assumption that $A= A^{-1}$, we show $$\mu_{G/H}(\pi A ^2) \leq K^2 \mu_{G/H}(\pi A).$$ We also demonstrate that $K^2$ cannot be replaced by $(1-\epsilon)K^2$ for any $\epsilon>0$.

In the general case (without $A=A^{-1}$), we show $\mu_{G/H}(\pi A ^2) \leq K^3 \mu_{G/H}(\pi A)$, improving an earlier result by An, Jing, Zhang, and the third author. Moreover, we are able to extract a compact set $B\subseteq A$ with $\mu_G(B)> \mu_G(A)/2$ such that $ \mu_{G/H}(\pi B^2) < 2K \mu_{G/H}(\pi B)$.
\end{abstract}

\maketitle

\section{Introduction}

\subsection{Backgrounds and statement of results}
This paper continues the investigation of doubling inequalities and classifications of small doubling sets. Our focus will be on the relationship between sets with small doubling in nonabelian locally compact groups and their quotients.

For a group $G$ equipped with a notion of size $|\cdot|$ (e.g., cardinality, measure, metric entropy, Banach density, etc.) we are often interested in
\begin{enumerate}
\item obtaining doubling inequality of the form $|A^2| \geq f(|A|)$
\item characterizing $G$ and $A \subseteq G$ with the small doubling  condition $|A^2| \leq g(|A|)$ 
\end{enumerate}
where $f$ and $g$ are suitable functions. Problem (1) and (2) can be seen as the dual of one another, and progress in one often leads to progress in the other. Sometimes, we refer to (1) as ``direct problems'' and (2) as ``inverse problems''.

These problems appear naturally in many branches of mathematics. In particular, results for abelian $G$ play absolutely central roles in additive combinatorics~\cite{Sanderquasi, GMGTMarton, U3vssum} as well as the study of geometric inequalities~\cite{Gardner, FigalliICM}. Our theorems later also apply to abelian $G$, but we will focus more on nonabelian groups where the motivation is more transparent.
Here, some elements of (1) and (2) already appeared implicitly in classical results like the Tit's Alternative \cite{T72},  Margulis' construction of expanders~\cite{M75},  Gromov Theorem on groups of polynomial growth~\cite{Gromov}, Sarnak and Xue's bound on multiplicity of automorphic representations~\cite{SX91}, and  Gowers' quasi-random groups~\cite{G08}. The topic really took off when Bourgain and Gamburd realized its role in understanding random walks on simple groups of Lie types~\cite{BG8Jem, BG9Jem, Helfgott08}; see also~\cite{Green2009ApproximateGA}. This brought applications in constructing expander graphs and in sieve theory~\cite{BG08, BourgainGamburd,  BG12, BGS10, PS16, BGT11} and led to the study of finite approximate groups with further applications in geometry~\cite{T08, Hrushovski, BGT}.

The nonabelian continuous setting of locally compact groups and Lie groups with Haar measure has been intensively studied recently with the development on nonabelian Brunn-Minkowski theory~\cite{JT, JTZ, AJTZ, SO3, machado2024} and the study of product mixing phenomena in compact Lie groups, which has surprising applications to quantum communication~\cite{ellis2024, quantum}. (There are also  other nonabelian settings considered in the last decade, e.g., the metric entropy settings~\cite{BG08Spectral, deSaxce15, BenoistdeSaxce}, and the approximate lattice setting~\cite{BjorklundHartnick, SimonApproximateLattice, Beyondlascar}.)

From our point of view, the main reason to consider the continuous setting is to eventually develop a unified approach to all settings. This is encapsulated by Hrushovski's Lie model theorem~\cite{Hrushovski} and subsequent generalizations~\cite{thesis, MW, Hru20, ArturoUdi, ArturoFrank, KruPillay}. These tell us that problem (2) for any group equipped  with a reasonable notion of size can often be reduced to small doubling in Lie groups. The nonabelian Brunn-Minkowski theory suggests an even finer picture, that small doubling in Lie groups (and thus other groups) arises from a few standard examples in Lie groups of small dimension. 

Toward realizing the above picture, and refining qualitative results  from approximate groups, An, Jing, Zhang, and the third author proved the following in~\cite{AJTZ}: If $G$ is a  connected noncompact Lie group, $G$ is unimodular (left Haar measures are right invariant), $\mu_G$ is a Haar measure of $G$, and $A \subseteq G$ is compact with $\mu_G(A^2)\le K\mu_G(A)$, then $G$ has a compact normal subgroup $H$ such that, with $\pi: G \to G/H$ the quotient map, we have
$$ \dim(G/H) \leq 
\log\lfloor K  \rfloor(\log\lfloor K  \rfloor+1)/2 \quad \text{and} \quad \mu_{G/H}( \pi(A)^2) \leq 32 K^6 \mu_{G/H}( \pi(A)). $$ 
Hence, $A$ is related to a set with larger but still small doubling, namely $\pi(A)$, in the Lie group with small dimension $G/H$.
The first inequality is demonstrated to be sharp in the same paper, but we thought the second inequality could be improved. On the other hand, it is folklore knowledge that $\mu_{G/H}( \pi(A)^2) \leq K \mu_{G/H}( \pi(A))$ cannot be achieved. (Indeed, a recent approach to the Polynomial Freiman Ruzsa Conjecture~\cite{GMGTMarton} replaces doubling with entropic doubling, partly because doubling inequalities are not preserved under quotients.)

Our first result is the sharpened version of the second inequality in the case $A=A^{-1}$. In fact, it is much more general as $G$ needs not be compact or connected (e.g. $G$ can be discrete), and $H$ can be any closed and normal subgroup of $G$.

\begin{theorem}\label{symk2}
Let $G$ be a unimodular group with Haar measure $\mu_G$. Let $A=A^{-1}$ be a compact subset of $G$ of positive measure such that $\mu_G(A^2) \leq K\mu_G (A)$. Let $H$ be a closed and normal subgroup of G and $\pi: G \rightarrow G/H$ be the quotient map. Then, 
$$\mu_{G/H}(\pi A ^2) \leq K^2 \mu_{G/H}(\pi A).$$ Moreover, $K^2$ cannot be replaced by $(1-\epsilon)K^2$ for any $\epsilon>0$ in the above statement.  
\end{theorem}
Without assuming $A=A^{-1}$, our approach can show $\mu_{G/H}(\pi A ^2) \leq K^3 \mu_{G/H}(\pi A)$, and that $\mu_{G/H}(\pi A ^2) \leq K_1K_2 \mu_{G/H}(\pi A)$ when $\mu(A^2)\le K_1\mu(A)$ and $\mu(A^{-1}A)\le K_2\mu(A)$; see Theorem~\ref{thm: main} for details. 
We believe that, even without the $A=A^{-1}$ assumption, $K^2$ (rather than $K^3$) should be the correct bound. If this is true, some new idea is needed.

Our second result moves closer to doubling preservation when looking at a large subset:

\begin{theorem} \label{thm:structure set}
Let $G$ be a unimodular group with Haar measure $\mu_G$. Let $A$ be a compact subset of $G$ of positive measure such that $\mu_G(A^2) \leq K\mu_G (A)$. Let $H$ be a closed normal subgroup of G and $\pi: G \rightarrow G/H$ be the quotient map. Then, there exists a compact $B \subseteq A$ with $\mu_G(B) > \frac{1}{2} \mu_G(A)$ such that $\mu_{G/H}(\pi B^2) < 2K \mu_{G/H}( \pi B)$.   
\end{theorem}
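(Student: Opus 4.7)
The plan is to construct $B$ by fiber truncation at a carefully chosen level. Introduce the fiber function $f \colon G/H \to [0,\infty)$ by $f(xH) = \mu_H(A \cap xH)$, which by the quotient integration formula satisfies $\int_{G/H} f\, d\mu_{G/H} = \mu_G(A)$ and has essential support $\pi A$. For a threshold $c > 0$ (whose value will be optimized), select inside each coset $xH$ with $f(xH) \ge c$ a compact subset $E_{xH} \subseteq A \cap xH$ of $\mu_H$-measure exactly $c$ (by measurable selection; $E_{xH} = \emptyset$ otherwise), and let $B$ be the closure in $A$ of $\bigcup_{xH} E_{xH}$. Then $\pi B = \{xH : f(xH) \ge c\}$, each fiber of $B$ has mass exactly $c$, and $\mu_G(B) = c \cdot \mu_{G/H}(\pi B)$.

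The key estimate is a lower bound on the fibers of $B^2$. For any $yH \in \pi B^2 = \pi B \cdot \pi B$, pick $xH \in \pi B$ with $x^{-1}yH \in \pi B$; then $(B \cap xH)(B \cap x^{-1}yH) \subseteq B^2 \cap yH$, and, using that $H$ is normal and that Haar measure on each coset is naturally identified with $\mu_H$ up to a left translation (which preserves $\mu_H$), this set-product has $\mu_H$-measure at least $c$, since each factor already has measure $c$. Integrating the pointwise bound $\mu_H(B^2 \cap yH) \ge c$ over $yH \in \pi B^2$ yields $\mu_G(B^2) \ge c \cdot \mu_{G/H}(\pi B^2)$. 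Provided the threshold is chosen so that $\mu_G(B) > \tfrac{1}{2}\mu_G(A)$, we conclude
\[
\mu_{G/H}(\pi B^2) \le \frac{\mu_G(B^2)}{c} \le \frac{\mu_G(A^2)}{c} \le \frac{K\mu_G(A)}{c} < \frac{2K\mu_G(B)}{c} = 2K\,\mu_{G/H}(\pi B),
\]
where the strict inequality uses $\mu_G(A) < 2\mu_G(B)$.

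The hard part is securing a threshold $c$ with $c \cdot \mu_{G/H}(\{f \ge c\}) > \tfrac{1}{2}\mu_G(A)$; when $f$ is heavy-tailed, the function $c \mapsto c \cdot \mu_{G/H}(\{f \ge c\})$ can fail to exceed $\mu_G(A)/2$ for any $c$. I plan to handle this via the structural bound $\|f\|_\infty \le K\bar{f}$, where $\bar{f} = \mu_G(A)/\mu_{G/H}(\pi A)$: this bound follows because for each $a \in A$, $\mu_H(A^2 \cap yH) \ge f(\pi(a)^{-1} yH)$ (since $aA \subseteq A^2$), and integrating gives $\mu_G(A^2) \ge \|f\|_\infty \cdot \mu_{G/H}(\pi A)$. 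Combined with a choice of $c$ near $\bar{f}/2$ and layer-cake bookkeeping, this should force the required threshold inequality; the borderline case where $f$ has heavy tails is the main source of technical subtlety, and may require allowing fibers of $B$ to be selected with slightly variable mass while preserving the product-fiber lower bound.
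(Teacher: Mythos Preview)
Your central inequality $c\cdot\mu_{G/H}(\pi B^2)\le\mu_G(A^2)$ is correct (it is one level of the paper's spillover inequality), but the threshold step is a genuine gap, and the bound $\|f\|_\infty\le K\bar f$ does not rescue it. Normalize so that $\mu_{G/H}(\pi A)=1$ and $\mu_G(A)=1$, hence $\bar f=1$. For any $\alpha\in(0,1/2]$ take a fiber function with distribution $\mu_{G/H}(\{f\ge c\})=\min(1,\alpha/c)$ on $[0,M]$ and $0$ beyond, where $M=\alpha e^{1/\alpha-1}$ makes $\int f=1$. Then $\|f\|_\infty=M$ and $\sup_{c>0} c\,\mu_{G/H}(\{f\ge c\})=\alpha\le\tfrac12$, so your required inequality $c\,\mu_{G/H}(\{f\ge c\})>\tfrac12$ fails for every $c$; in particular at $c=\bar f/2=1/2$ one gets only $\alpha$. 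Since $M$ ranges over $[e/2,\infty)$ as $\alpha$ varies, the constraint $\|f\|_\infty\le K\bar f$ is satisfied for every $K\ge e/2$, and the gap is not a borderline phenomenon: $\sup_c c\,\mu_{G/H}(\{f\ge c\})$ can be as small as roughly $1/\log K$. Your fallback of allowing full (untruncated) fibers does not help: with $B_s=A\cap\pi^{-1}(\{f\ge s\})$ the same fiber lower bound gives $\mu_{G/H}(\pi B_s^2)\le\mu_G(A^2)/s$, and comparing with $2K\mu_{G/H}(\pi B_s)=2K\,\mu_{G/H}(\{f\ge s\})$ leads back to exactly the same condition $s\,\mu_{G/H}(\{f\ge s\})>\tfrac12$.

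The missing idea is that the single-level bound $c\,\mu_{G/H}(\pi A_c^2)\le\mu_G(A^2)$ is much weaker than the integrated spillover
\[
\int_0^\infty \mu_{G/H}(\pi A_t\cdot\pi A_t)\,dt\ \le\ \mu_G(A^2)\ \le\ K\mu_G(A)\ =\ \int_0^\infty K\,\mu_{G/H}(\pi A_t)\,dt,
\]
which the paper uses. Averaging this inequality over $t$ directly produces a level $t$ with $\mu_{G/H}(\pi A_t^2)<2K\,\mu_{G/H}(\pi A_t)$, without ever passing through the lossy step $\mu_{G/H}(\pi A_t^2)\le\mu_G(A^2)/t$; one then checks that such a $t$ can be taken with $\mu_G(A\cap\pi^{-1}(\pi A_t))>\tfrac12\mu_G(A)$ by a short layer-cake computation. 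In short: compare $\mu_{G/H}(\pi A_t^2)$ to $K\mu_{G/H}(\pi A_t)$ level by level and average, rather than bounding every $\mu_{G/H}(\pi A_t^2)$ by the fixed quantity $\mu_G(A^2)/t$. (There are also secondary technical issues in your construction --- measurable selection of the $E_{xH}$, and the effect of taking closure on the fibers and on $\pi B$ --- but these are moot given the main gap.)
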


\subsection{Notation and convention} We assume some familiarity with locally compact groups and Lie groups; the reader can refer to Appendixes A and B in~\cite{JTZ} for details.

Unless specified otherwise, $G$ is a unimodular locally compact group, $H$ is a closed normal subgroup of $G$, and $G/H$ is the quotient locally compact group. It is well-known that $H$ and $G/H$ are also unimodular.  We let $\pi: G \to G/H$ denote the quotient map. When there is no confusion, for $A \subseteq G$, we will write $\pi A$ instead of $\pi(A)$ and $\pi A^2$ instead of $\pi(A)^2$.

We will let $\mu_G$, $\mu_H$, and $\mu_{G/H}$ denote the Haar measure on $G$, $H$, and $G/H$ respectively. By scaling we arrange that Fubini's theorem holds; that is, for all continuous $f: G \to \RR$ with compact support, 
$$ \int_G f d \mu_G = \int_{G/H} \left(\int_H f(x^{-1}y) d\mu_H(y) \right) d \mu_{G/H} (xH). $$ 
In particular, if $G$ is isomorphic to the direct product of $H$ and $G/H$, then $\mu_G$ is just the product measure of $\mu_H$ and $\mu_{G/H}$. For a compact group, we assume the Haar measure is normal; that is,  the measure of the whole group is $1$. For a discrete group, we assume that the Haar measure is the counting measure. It can be checked that there is no contradiction with the earlier assumptions in the examples we consider later.

For any subset $A \subseteq G$, we denote $$A^{-1} = \{a^{-1} : a \in A\}$$ and we say that a subset $A$ is {\bf symmetric} if $A = A^{-1}$. For $A, B \subseteq G$, their product set $AB$ is the set $\{ab: a \in A, b\in B\}$. We write $A^2$ for $AA$.

\section{lower bounds}

Throughout this section, $\TT = \RR/\ZZ$ is the one-dimensional torus equipped with the usual Euclidean topology. Hence, its normalized Haar measure is just the usual Lebesgue measure for torus. Let $\glz$ be the group of invertible $2\times 2$ matrices equiped with the discrete topology, so the Haar measure is the counting measure in this case. Finally, we equip the product of two locally compact groups  with their product topology. The condition in the convention section implies the chosen Haar measure is  chosen is the product measure.

The following proposition verifies the sharpness assertion in Theorem~\ref{symk2} by providing an example where the quotient doubling is asymptotically $K^2$.

\begin{proposition}\label{K2exp}
Let $G$ be the product $H \times Q$, where $H$ is any compact group, and $Q$ is the product $\glz\times\TT$. Then for every odd $K\in\ZZ^+$, there exists a compact symmetric subset $A \subseteq G$ with positive measure such that $\mu(A^2) = K\mu(A)$ and $$\mu_{Q}(\pi A^2) = (K^2-2K+2) \mu_{Q} (\pi A),$$ 
where $\pi:G\to Q$ is the projection to $Q$.
\end{proposition}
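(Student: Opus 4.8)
\emph{Proof proposal.} The plan is to write down $A$ explicitly as a finite union of ``bricks'' inside $H\times\glz\times\TT$. Concretely $A=A_0\cup A_1$, where $A_1=\bigcup_{s\in S}C_s\times\{s\}\times I_s$ is ``spread'' over a finite symmetric set $S\subseteq\glz$, each $C_s\subseteq H$ is measurable with $C_{s^{-1}}=C_s^{-1}$ and each $I_s\subseteq\TT$ is a short symmetric-up-to-sign set with $I_{s^{-1}}=-I_s$, while $A_0$ is a single ``bulky'' brick over one $\glz$-coordinate carrying a full (or large) $H$-fibre; taking all the data symmetric makes $A=A^{-1}$ automatic. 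The point of choosing $S$ inside a free subgroup of $\glz$ (adding the identity, and one involution when parity demands it) is that the only coincidences among the products $st$, $s,t\in S$, are the forced ones $ss^{-1}=e$, so $|S^2|$ is as large as the combinatorics allow; this is the only place nonabelianness of $\glz$ is used, and it is what lets the projection have doubling of order $K^2$. Because $\glz$ is discrete and the bricks sit over distinct coordinates, $\mu_G(A)$, $\mu_G(A^2)$, $\mu_Q(\pi A)$ and $\mu_Q(\pi A^2)$ all reduce to explicit bookkeeping once one expands $A^2=\bigcup_{s,t\in S}C_sC_t\times\{st\}\times(I_s+I_t)$ (together with the cross terms involving $A_0$) and tracks the few overlaps.

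The design principle is that products in $G$ must suffer heavy collapse in the $H$-direction that is invisible in the quotient. On the $Q$-side, $H$ is forgotten, so $\pi A_1$ genuinely has doubling of order $|S|$ and $\pi A$ doubling of order $K^2$; on the $G$-side, the contribution of $A_1A_1$ over the ``new'' coordinates $S^2\setminus S$ is kept negligible by taking the fibres $C_s$ there of tiny $\mu_H$-measure (and choosing them so that $C_sC_t$ does not blow up), while the bulky brick $A_0$ is arranged so that multiplying by it floods only a controlled set of coordinates of $A^2$, producing exactly a factor $K$ in $\mu_G$. The remaining freedom — the common arc length in $\TT$, the measures of the fibres (a continuous knob, using that an infinite $H$ has atomless Haar measure, and in any case the $\TT$-factor), and the number of free generators — is then used to solve $\mu_G(A^2)=K\mu_G(A)$ and $\mu_Q(\pi A^2)=(K^2-2K+2)\mu_Q(\pi A)$ simultaneously and exactly. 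Oddness of $K$ enters through the identity $|S^2|=(K-1)^2+1$ for the size-$K$ free skeleton $S=\{e\}\cup\{x_i^{\pm 1}:1\le i\le (K-1)/2\}$, which is the combinatorial origin of the target constant $K^2-2K+2$; the quotient equation is matched by choosing $S$ (and, if needed, inserting a controlled number of commuting relations, e.g.\ replacing some free generators by powers of a single infinite-order element, or replacing the arcs $I_s$ by suitably spread-out subsets of $\TT$) so that the arc-weighted doubling of $\pi A$ hits $(K-1)^2+1$ on the nose, and the $G$-equation is then solved for the fibre-measure parameter by the intermediate value theorem.

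The hard part will be that $\mu_G(A^2)/\mu_G(A)$ and $\mu_Q(\pi A^2)/\mu_Q(\pi A)$ are genuinely coupled: with uniformly-sized $H$-fibres they are forced to agree up to a bounded factor, so the whole construction lives or dies on engineering the bulky/spread split and the fibre pattern so that the two ratios decouple by precisely the right amount, and then on checking that the two exact equalities are not in conflict for the chosen skeleton. Concretely, one must exhibit a configuration in which $\mu_G(A^2)/\mu_G(A)$ can be pushed down to $K$ without dragging $\mu_Q(\pi A^2)/\mu_Q(\pi A)$ down with it, leaving enough slack in the continuous parameters to hit both targets — and then verify that none of the bookkeeping hypotheses (general position in $\glz$, $\mu_\TT(I_s+I_t)=\mu_\TT(I_s)+\mu_\TT(I_t)<1$, the chosen fibre-measure small enough not to disturb the coincidence pattern, compactness and positivity of $A$) is violated by the final choice. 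I expect the verification of the two displayed equalities to then be a routine, if lengthy, computation.
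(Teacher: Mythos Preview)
Your architecture is right --- a bulky brick over the identity plus a spread piece over a symmetric set $S\subseteq\glz$ with $|S|=K$ and $|S^2|=(K-1)^2+1=K^2-2K+2$ --- but you are missing the one idea that makes the construction land exactly, and without it your parameter-tuning programme does not obviously close.

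The paper does not use short arcs or small-but-positive $H$-fibres at all. It takes the spread piece to have \emph{measure zero} in both coordinates: the $H$-fibre is the singleton $\{1_H\}$, and the $\TT$-fibre is the middle-thirds Cantor set $C$, which is compact, symmetric, has $\mu_\TT(C)=0$, and crucially satisfies $C+C=\TT$. Concretely, with $K=2N+1$ and $\MM=\{M_1^{\pm1},\dots,M_N^{\pm1}\}$ a suitable symmetric $2N$-element set in $\glz$, one sets
\[
A_1=H\times\{I\}\times\TT,\qquad A_2=\{1_H\}\times\MM\times C,\qquad A=A_1\cup A_2.
\]
Because $A_2$ is $\mu_G$-null, $\mu_G(A)=\mu_G(A_1)=1$. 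In $A^2$ the cross terms $A_1A_2,\,A_2A_1$ fill $H\times\MM\times\TT$, while $A_2A_2=\{1_H\}\times\MM^2\times\TT$ is still $\mu_G$-null (the $H$-fibre is a point), so $\mu_G(A^2)=|\{I\}\cup\MM|=K$ on the nose. On the quotient side, $\pi A_2=\MM\times C$ is $\mu_Q$-null, so $\mu_Q(\pi A)=1$; but because $C+C=\TT$, every product of two elements of $\pi A$ already has a full torus fibre, and $\pi A^2=(\{I\}\cup\MM)^2\times\TT$, giving $\mu_Q(\pi A^2)=4N^2+1=K^2-2K+2$ exactly. No intermediate value theorem, no knobs.

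Your proposal heads in the opposite direction on $\TT$: you insist on $\mu_\TT(I_s+I_t)<1$, which is precisely what you must \emph{not} have if the spread--spread products are to contribute full torus fibres in $Q$ while contributing nothing in $G$. With positive arc lengths and positive $H$-fibres, the contribution of $A_1A_1$ shows up in both $\mu_G(A^2)$ and $\mu_Q(\pi A^2)$, and the two ratios are coupled by the same parameters; your own ``hard part'' paragraph identifies this, but the sketch gives no mechanism that actually decouples them to hit two exact targets simultaneously. The fix is not finer tuning but the null-set trick above: let the spread fibres be genuinely null (Cantor in $\TT$, a point in $H$), so the $G$-doubling sees only the $K$ coordinates of $\{I\}\cup\MM$ while the $Q$-doubling sees all $K^2-2K+2$ coordinates of its square.
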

\begin{proof}
    Suppose $K=2N+1$. We denote by $C$ the image of the usual Cantor set in $\TT$ under the quotient map from $\RR$. Then $C$ is compact with $\mu_\TT(C)=0$,   $C=-C$, and $C+C=\TT$. Let $S=\{2^k:k\in[N]\}$ and observe that $|S-S|=N(N-1)+1$. For all $k\in[N]$, define $$M_k=\begin{pmatrix}0&1\\1&2^k\end{pmatrix}\in\glz.$$ Then, consider $$\MM = \{M_1,\dots,M_N,M_1^{-1},\dots,M_N^{-1}\}\text{ and } \II=\left\{\begin{pmatrix}1&0\\0&1\end{pmatrix}\right\},$$ and note that $|(\II\cup\MM)^2|=4N^2+1$. Now let $A=A_1\sqcup A_2$, where $A_1=H \times \II\times\TT$ and~$A_2=\{1_H\} \times \MM\times C$. Both $A_1$ and $A_2$ are compact and symmetric in $G$, and so is $A$.

    We now perform the measure calculations. \eq{\mu_G(A)&=\mu_G(A_1)=\mu_{\glz}(\II)=|\II|=1.\\
    \mu_G(A^2)&= \mu_G(A^2_1 \cup A_1A_2 \cup A_2A_1)\\
    &= \mu_Q((\II\times\TT)^2\cup (\II\times\TT)(\MM\times C)\cup (\MM\times C)(\II\times\TT))\\
    &=\mu_{\glz}(\II^2\cup \II\MM\cup \MM\II)\\&=1+2N\\&=K.\\
    \mu_{Q}(\pi A)&=\mu_Q((\II\times\TT)\cup(\MM\times C))=\mu_{\glz}(\II)=1.\\
    \mu_{Q}(\pi A^2)&=\mu_Q((\II\cup\MM)^2\times\TT)=\mu_{\glz}((\II\cup\MM)^2)=4N^2+1=K^2-2K+2.}
    Hence we obtain the desired conclusion.
\end{proof}

There are a number of other related constructions which serve other purposes. We describe them in the following remark.

\begin{remark}
    A simpler example is possible if one only wants the main theorem to be sharp up to a constant. One can choose $G = H \times Q$ with $H=\TT$ and $Q$ the additive group $\RR$. One can then use a union of translated copies of the Cantor set, in place of $\MM\times C$, to obtain a slightly weaker quotient doubling of $K^2/(4+o(1))$, rather than $K^2-2K+2$. In this case,~$G$ would be connected and abelian.

    Those examples can be discretized to get examples where the group $G$ is discrete.
\end{remark}

\section{Spillover inequality}

A key ingredient of our proofs is the \textit{Spillover Technique}, which we will now elaborate.
For $\mu_G$-measurable $A \subseteq G$, we define its {\it fiber length function} $f_A: G/H \to \RR^{\geq 0}$ by setting 
$$f_A(gH)=\mu_{H}(g^{-1}A\cap H) \text{ for } g \in G, $$
and call $f_A(gH)$ the {\it fiber length} of $A$ at $gH$. For $t\geq 0$, we denote
$$ \pi A_t= \{gH \in G/H: f_A(gH) \geq t \}. $$
In particular, $\pi A_t$ is a supper level set of $f_A$.
\begin{lemma} \label{lem: level set}
    Suppose $A\subseteq G$ is $\mu_G$-measurable. Then the function $f_A$ is $\mu_{G/H}$-measurable, so  $\pi A_t$ is $\mu_{G/H}$ measurable for all $t\geq 0$. We have
    $$   \mu_G(A) = \int_{t \geq 0} \mu_{G/H} (\pi A_t) dt.$$
\end{lemma}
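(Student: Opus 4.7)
The plan is to prove the lemma by combining the Fubini-type identity stated in the Convention with the standard layer-cake (Cavalieri) representation of non-negative integrals.

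First I would verify that $f_A$ is well-defined on $G/H$: if $g' = gh$ with $h \in H$, then $g'^{-1}A \cap H = h^{-1}(g^{-1}A \cap H)$, so left-invariance of $\mu_H$ gives $f_A(g'H) = f_A(gH)$.

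Second, I would apply the Fubini identity to the indicator function $\mathbf{1}_A$. The identity in the Convention is stated only for continuous compactly supported $f$, but it extends routinely to $\mathbf{1}_A$ via inner regularity of the Radon measure $\mu_G$ and monotone convergence: approximate $A$ from inside by compact sets and use Urysohn's lemma to obtain continuous compactly supported approximants of their indicators. A direct computation identifies the inner integral $\int_H \mathbf{1}_A(x^{-1}y)\, d\mu_H(y)$ with $\mu_H(xA \cap H) = f_A(x^{-1}H)$. Since $G/H$ is unimodular, $\mu_{G/H}$ is inversion-invariant, so substituting $x \mapsto x^{-1}$ yields
$$ \mu_G(A) \;=\; \int_{G/H} f_A(gH)\, d\mu_{G/H}(gH). $$
In particular, $f_A$ is $\mu_{G/H}$-measurable (up to completion of the Radon measure), and hence each super-level set $\pi A_t$ is $\mu_{G/H}$-measurable.

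Finally, I would invoke the layer-cake representation applied to the non-negative measurable function $f_A$:
$$ \int_{G/H} f_A\, d\mu_{G/H} \;=\; \int_{0}^{\infty} \mu_{G/H}(\pi A_t)\, dt. $$
Combining this with the previous display finishes the proof. The only mild technical obstacle is the extension of the Fubini identity from $C_c(G)$ to indicators of general $\mu_G$-measurable sets; this is routine measure theory and can be organized either by a monotone class argument or via the Riesz representation theorem.
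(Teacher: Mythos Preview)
Your proposal is correct and follows essentially the same approach as the paper: the paper's proof is extremely terse, simply asserting that measurability of $f_A$ follows ``by induction on complexity of Borel sets and then appealing to regularity'' (which is exactly the monotone-class/Urysohn extension of the quotient Fubini formula that you outline) and that the integral identity is ``a standard fact of measure theory'' (your layer-cake step). Your write-up is in fact more detailed than the paper's own proof; the only cosmetic point is that the inversion substitution you perform to pass from $f_A(x^{-1}H)$ to $f_A(gH)$ is an artifact of the paper's slightly unusual $f(x^{-1}y)$ convention for the Weil formula, and it is harmless since $G/H$ is unimodular.
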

\begin{proof}
    The first statement can be proven in the same way as in the special case when~$G=\RR^2$ and $H=G/H=\RR$, by induction on complexity of Borel sets and then appealing to regularity. The last statement is a standard fact of measure theory.
\end{proof}

Lemma~\ref{lem: level set} explains why we are interested in  the families $(\pi A_t)_{t\geq 0}$ and $(\pi ({A^2})_t)_{t\geq 0}$, and in relating the latter to the former. To this end, we encounter a technical difficulty. Each~$\pi A_t$~($t\ge0$) is measurable, but its product with another measurable set $X \subseteq G/H$ might not be measurable, even when $X$ is compact. One can overcome this issue in several ways. In this paper, we will approximate $\pi A_t$ in a suitable way by a $\sigma$-compact set $\widetilde{\pi A_t}$ in Lemma~\ref{lemma: modification}. Inner measure was used in~\cite{JT}, which is conceptually the same approach. One can also play the set-theoretic trick by working first in the Solovay-Krivine set-theoretic universe, where every subset of a locally compact groups is measurable, and then invoking the Shoenfield Absoluteness Theorem to deduce the result in general; see Remark~\ref{rem: settheory tricks}. 

\begin{lemma} \label{lemma: modification}
If $A \subseteq G$ is $\mu_G$-measurable and $f_A$ is its fiber length function, then there is a family $(\widetilde{\pi A_t})_{t \geq 0}$ of $\sigma$-compact subsets of $\pi A$ such that the following holds:
    \begin{enumerate}[{\rm (i)}]
        \item $\widetilde{\pi A_0} = \pi A_0 =\pi A$,
        \item The family $(\widetilde{\pi A_t})_{t \geq 0}$ is nonincreasing; that is, $\widetilde{\pi A_s} \supseteq \widetilde{\pi A_t}$ for $0\leq s<t$,
        \item $\widetilde{\pi A_t} \subseteq \pi A_t$. Hence, for $ gH\in \widetilde{\pi A_t}$, we have $f_A(gH) \geq t$,
        \item For $0<s< t$, we have $\pi A_t \setminus \widetilde{\pi A_s}$ is a $\mu_{G/H}$-null set. (We might not have $\pi A_t \setminus \widetilde{\pi A_t}$ is $\mu_{G/H}$-null, and this is a replacement.)
    \end{enumerate}
    Moreover, when $A$ is symmetric, we can arrange that $\widetilde{\pi A_t}$ is symmetric for all $t\geq 0$.
\end{lemma}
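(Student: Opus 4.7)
The plan is to exploit inner regularity of $\mu_{G/H}$, which is a Radon measure on the locally compact Hausdorff group $G/H$, so every measurable set can be approximated from within by a $\sigma$-compact set of full measure. The monotonicity requirement (ii) and the ``replacement'' condition (iv) are then arranged by a standard ``limit from above'' construction along a countable dense set of thresholds.

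Concretely, Lemma~\ref{lem: level set} tells us each $\pi A_t$ is $\mu_{G/H}$-measurable. By inner regularity, for each positive rational $q$ I choose a $\sigma$-compact $K_q \subseteq \pi A_q$ with $\mu_{G/H}(\pi A_q \setminus K_q) = 0$. I then set $\widetilde{\pi A_0} = \pi A$ and, for $t > 0$,
\[ \widetilde{\pi A_t} \;=\; \bigcup_{q \in \QQ,\, q > t} K_q, \]
which is a countable union of $\sigma$-compact sets and hence $\sigma$-compact. Conditions (i)--(iii) follow immediately: (i) is by definition; (ii) holds because the index set $\{q \in \QQ : q > t\}$ shrinks with $t$; and (iii) holds because the super level sets are nested, so $K_q \subseteq \pi A_q \subseteq \pi A_t$ whenever $q > t$. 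For (iv), given $0 < s < t$, I pick any rational $r$ with $s < r < t$: then $K_r \subseteq \widetilde{\pi A_s}$ and $\pi A_t \subseteq \pi A_r$, so $\pi A_t \setminus \widetilde{\pi A_s} \subseteq \pi A_r \setminus K_r$, which is $\mu_{G/H}$-null.

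For the ``moreover'' clause ($A = A^{-1}$), I need each $\pi A_q$ to be symmetric in $G/H$, so that I may replace $K_q$ by the still $\sigma$-compact, symmetric subset $K_q \cup K_q^{-1} \subseteq \pi A_q$. Symmetry of $\pi A_q$ reduces to the pointwise identity $f_A(gH) = f_A(g^{-1}H)$. A substitution $h \mapsto h^{-1}$ in the defining integral, together with the inversion-invariance of $\mu_H$ and $A = A^{-1}$, gives $f_A(g^{-1}H) = \mu_H(Ag^{-1} \cap H)$, whereas $f_A(gH) = \mu_H(g^{-1}A \cap H)$. Since $H$ is normal in $G$, the former set is the conjugate $c_g(g^{-1}A \cap H)$, where $c_g$ denotes conjugation by $g$; since $G$ and $H$ are both unimodular and $H$ is normal, $c_g$ preserves $\mu_H$, and the desired identity follows.

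I expect the only real subtlety to be the symmetry check just sketched --- in particular, the invocation of conjugation-invariance of $\mu_H$ under the normality and unimodularity hypotheses. The monotonization of the inner-regular approximations is routine once Lemma~\ref{lem: level set} is in hand.
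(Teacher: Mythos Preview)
Your proposal is correct and follows essentially the same approach as the paper's proof: choose $\sigma$-compact inner approximations $K_q$ (the paper calls them $S_q$) at rational thresholds and then take unions over rationals above $t$. The only cosmetic differences are that the paper uses $q\ge t$ rather than $q>t$ in the union, and symmetrizes via $S_q\cap S_q^{-1}$ rather than your $K_q\cup K_q^{-1}$; both variants work. Your extra justification that $\pi A_q$ is symmetric---via conjugation-invariance of $\mu_H$, which indeed holds because $G$ is unimodular and $H$ is normal---fills in a detail the paper leaves implicit.
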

\begin{proof}
    For each $q\in\QQ$, use the inner regularity of $\mu_{G/H}$ to choose a $\sigma$-compact set~${S_q \subseteq \pi A_q} $ such that  
$$ \mu_{G/H}( \pi A_q \setminus S_q) =0. $$
Set $\widetilde{\pi A_t} = \bigcup_{q\geq t}  S_q $ for $t > 0$ and $\widetilde{\pi A_0}=\pi A$. We can easily check that the properties (i-iv) are satisfied.

Now assume that $A$ is symmetric. Note that $\pi A_q$ is symmetric, so we can arrange that~$S_q$ is symmetric too by replacing it with $S_q\cap S_q^{-1}$ if necessary. From the construction, we get the symmetry of $\widetilde{\pi A_t}$ for all $t\geq 0$.
\end{proof}

We refer to a family $(\widetilde{\pi A_t})_{t\geq 0}$ satisfying the condition of the above lemma a {\it $\sigma$-compact modification} of $(\pi A_t)_{t\geq 0}$.

We now describe the key inequality used later. This was used  and dubbed the spillover technique in~\cite[Lemma~9.4]{JT}.  The form we use here is slightly different, and this is essential for our purposes.

\begin{lemma}[Spillover Inequality] \label{lem: continuousspillover}
    Suppose $A, B \subseteq G$ are compact, $f_B$ the fiber size function of $B$, and $(B_t)_{t\geq 0}$ a modified superlevel sets family of $f_B$. Then we have $$\mu_G(AB)\geq \int_{t\geq 0}\mu_{G/H}(\pi A \widetilde{\pi B_t})dt$$
and
$$\mu_G(BA)\geq \int_{t\geq 0}\mu_{G/H}(\widetilde{\pi B_t}\pi A )dt.$$
\end{lemma}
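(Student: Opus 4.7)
The plan is to reduce both inequalities to a pointwise inclusion of super-level sets in $G/H$ and then integrate via Lemma~\ref{lem: level set}. Writing $\mu_G(AB) = \int_{t \geq 0} \mu_{G/H}(\pi(AB)_t)\, dt$, it suffices to establish, for each $t \geq 0$, the set-theoretic inclusion $\pi A \cdot \widetilde{\pi B_t} \subseteq \pi(AB)_t$ together with the $\mu_{G/H}$-measurability of $\pi A \cdot \widetilde{\pi B_t}$. The measurability step is essentially automatic: $\pi A$ is compact as the continuous image of $A$, while $\widetilde{\pi B_t}$ is $\sigma$-compact by construction, so the product is a countable union of compact sets in $G/H$, hence Borel.

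The crux is the inclusion. Fixing $gH \in \pi A$ and $g'H \in \widetilde{\pi B_t}$, I would pick a representative $a_0 = g h_1 \in A$ with $h_1 \in H$ and then analyze the fiber of $AB$ over $gg'H$. Property~(iii) of Lemma~\ref{lemma: modification} guarantees $\mu_H(g'^{-1}B \cap H) \geq t$; for each $h$ in this set, $g'h \in B$ so $a_0 g' h \in AB$, and normality of $H$ yields
\[
(gg')^{-1}(a_0 g' h) \;=\; (g'^{-1} h_1 g')\, h \;\in\; H.
\]
Hence as $h$ ranges over $g'^{-1}B \cap H$, these elements sweep out a left translate of that set inside $(gg')^{-1}AB \cap H$. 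Left invariance of $\mu_H$ then forces $f_{AB}(gg'H) \geq t$, i.e.\ $gg'H \in \pi(AB)_t$; integrating against $dt$ delivers the first inequality.

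For the second inequality, I would run the mirror-image argument with the product order reversed, and the analogous computation now produces expressions of the form $(g^{-1} h g)\, h_1$. The step I anticipate as the main technical point is verifying that the map $h \mapsto g^{-1} h g$ preserves $\mu_H$, so that the fiber measure bound still transfers. This holds in our setting: for a closed normal subgroup of a unimodular locally compact group, conjugation by any element of $G$ restricts to a measure-preserving automorphism of $H$ (consistent with the Haar-measure normalizations fixed in the conventions, under which $H$ and $G/H$ are also unimodular). Once this is in hand, the remainder of the argument is identical to the previous case.
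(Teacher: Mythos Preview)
Your proposal is correct and follows essentially the same route as the paper: establish the pointwise inclusions $\pi A\cdot\widetilde{\pi B_t}\subseteq\pi(AB)_t$ and $\widetilde{\pi B_t}\cdot\pi A\subseteq\pi(BA)_t$, then integrate via Lemma~\ref{lem: level set}. Your treatment of the second inequality is in fact more careful than the paper's one-line remark (``$\mu_H$ is bi-invariant''), since the mirror argument genuinely requires that $G$-conjugation preserve $\mu_H$, not merely that $H$ be unimodular; you correctly flag and justify this.
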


\begin{proof}
    We simply claim that $\pi A\widetilde{\pi B_t}\subseteq \pi(AB)_t$ and $\widetilde{\pi B_t}\pi A\subseteq \pi(BA)_t$ for all $t$, upon which the lemma follows immediately from Lemma~\ref{lem: level set}. Let $aH\in \pi A$ and $bH\in \widetilde{\pi B_t}$ be arbitrary. Note that \eq{f_{AB}(abH)&=\mu_H(b^{-1}a^{-1}AB\cap H)\\&\ge\mu_H(b^{-1}B\cap H)\\&=f_B(bH)\\&\ge t.} Hence by definition, $\pi A\widetilde{\pi B_t}\subseteq \pi(AB)_t$, and a similar argument shows that $\widetilde{\pi B_t}\pi A\subseteq \pi(BA)_t$ (note that $\mu_H$ is bi-invariant).
\end{proof}

\section{Upper bounds}

Let $A, B \subseteq G$ be $\sigma$-compact subsets of $G$ with positive measure. The {\it Ruzsa distance} between $A$ and $B$ is defined to be
$$d(A, B) = \log \left(\frac{\mu_G (AB^{-1})}{\sqrt{\mu_G(A) \mu_G(B)}} \right).$$

We collect a number of standard facts which can be found in~\cite{T08}.
\begin{fact}
Let $d$ be defined as above, then we have 
\begin{enumerate}
     \item $d(A, A) \geq 0$ for all $\sigma$-compact $A \subseteq G$. Moreover, $d(A, A) = 0$ if and only if $A$ is a translate of an open subgroup of $G$.
    \item $d(A, B) = d(B, A)$ for all $\sigma$-compact $A, B \subseteq G$.
    \item $d(A, C) \leq d(A, B) + d(B, C)$ for all $\sigma$-compact $A, B, C \subseteq G$.
    \item $d(A, B) = d(aA, Bb)$ for all $\sigma$-compact $A, B \subseteq G$ and $a, b \in G$.
\end{enumerate}

\end{fact}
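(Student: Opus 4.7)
The plan is to verify the four clauses of the Fact using standard manipulations with Haar measure in a unimodular group; only the equality case of (1) is genuinely nonroutine, while the other parts follow from direct calculations using bi-invariance of $\mu_G$.

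For (2), I would use that in a unimodular group $\mu_G$ is inversion-invariant (a standard consequence of left-right invariance), combined with $(AB^{-1})^{-1} = BA^{-1}$. This yields $\mu_G(AB^{-1}) = \mu_G(BA^{-1})$, and symmetry of the denominator gives $d(A,B) = d(B,A)$. For (4), I would expand $(aA)(Bb)^{-1} = aAb^{-1}B^{-1}$ and apply left- and right-invariance together with the fact that in a unimodular group $\mu_G(g_1 X g_2) = \mu_G(X)$ for all $g_1, g_2$; combined with $\mu_G(aA) = \mu_G(A)$ and $\mu_G(Bb) = \mu_G(B)$, the ratio defining $d$ is unchanged.

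For the triangle inequality (3), the key step is the Ruzsa-type inequality
$$\mu_G(AC^{-1})\,\mu_G(B)\ \le\ \mu_G(AB^{-1})\,\mu_G(BC^{-1}).$$
I would fix a Borel selection $y\mapsto(a(y),c(y))$ with $y=a(y)c(y)^{-1}$ for $y\in AC^{-1}$, and consider
$$\phi: AC^{-1}\times B\longrightarrow AB^{-1}\times BC^{-1},\qquad (y,b)\mapsto \bigl(a(y)b^{-1},\ bc(y)^{-1}\bigr).$$
The product of the two image coordinates recovers $a(y)c(y)^{-1}=y$, from which $a(y),c(y)$ are fixed by the selector and $b$ can then be solved for; hence $\phi$ is injective. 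Fubini bounds the product of the two left-hand measures by that of the right-hand ones, and taking logarithms yields (3).

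The one real obstacle is the equality case of (1). The inequality $d(A,A)\ge 0$ is immediate: for any $a\in A$, right-invariance gives $\mu_G(AA^{-1})\ge \mu_G(Aa^{-1})=\mu_G(A)$. Equality, however, forces $AA^{-1}=Aa^{-1}$ (modulo null sets) for almost every $a\in A$, and hence $A^{-1}A$ is essentially a subgroup with $A$ contained in a single coset. Promoting this to the precise statement that $A$ is a translate of an \emph{open} subgroup requires a Kneser/Kemperman-type structure theorem in the locally compact unimodular setting; rather than reprove it, I would cite the corresponding result from~\cite{T08}.
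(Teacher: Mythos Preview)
The paper does not prove this statement at all: it is recorded as a \emph{Fact} with the remark that these ``standard facts \ldots\ can be found in~\cite{T08}.'' So there is nothing in the paper to compare your argument against; your plan supplies a proof where the paper only gives a citation.

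Your arguments for (2), (4), and the inequality direction of (1) are correct and are the standard ones. Your handling of the equality case in (1) --- reduce to the Kemperman/Kneser-type structure result and cite it --- is also exactly what one does in practice.

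One genuine point to tighten in (3): in the continuous setting, injectivity of your map $\phi$ is not by itself enough to conclude
\[
\mu_G(AC^{-1})\,\mu_G(B)\ \le\ \mu_G(AB^{-1})\,\mu_G(BC^{-1});
\]
you also need $\phi$ to be measure-preserving for the product Haar measure on $G\times G$. This holds, and is worth making explicit: writing $u=a(y)b^{-1}$, one has $bc(y)^{-1}=u^{-1}y$, so $\phi(y,b)=(u,u^{-1}y)$, and the map $(y,b)\mapsto(y,b^{-1})\mapsto(y,a(y)b^{-1})=(y,u)\mapsto(u,y)\mapsto(u,u^{-1}y)$ is a composition of inversion in one factor, fiberwise left translation, a coordinate swap, and another fiberwise left translation --- each measure-preserving by unimodularity and Fubini. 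With that, the image of $AC^{-1}\times B$ sits inside $AB^{-1}\times BC^{-1}$ with the same product measure, yielding the inequality. The Borel selector you invoke exists by standard measurable selection (e.g.\ Jankov--von~Neumann) since $A$ and $C$ are $\sigma$-compact. With these two details filled in, your plan for (3) is complete.
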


Our main result in this section has three components. The first is a resolution of Theorem~\ref{symk2}, the second concludes a looser bound without the symmetry assumption, and the third is a more general result.

\begin{theorem} \label{thm: main}
Let $G$ be a unimodular group with Haar measure $\mu_G$, $A\subseteq G$ is $\sigma$-compact with positive measure, and $H$  a closed normal subgroup of G and $\pi: G \rightarrow G/H$ be the quotient map. Then, we will have the following:
\begin{enumerate}[{\rm (i)}]
    \item If $\mu_G(A^2)\leq K \mu_G(A)$ and $A=A^{-1}$, then $\mu_{G/H}(\pi A ^2) \leq K^2 \mu_{G/H}(\pi A)$.
    \item If $\mu_G(A^2)\leq K \mu_G(A)$, then $\mu_{G/H}(\pi A ^2) \leq K^3 \mu_{G/H}(\pi A)$.
    \item If $\mu_G(A^2)\leq K_1 \mu_G(A)$ and $\mu_G(A^{-1}A)\leq K_2 \mu(A)$, then $\mu_{G/H}(\pi A ^2) \leq K_1K_2 \mu_{G/H}(\pi A)$.

\end{enumerate}
\end{theorem}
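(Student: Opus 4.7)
My plan is to prove part (iii) first and then derive (i) and (ii) from it. The main idea is that the Ruzsa triangle inequality in the unimodular quotient $G/H$ (a standard fact listed at the start of this section) combines nicely with the Spillover Inequality (Lemma~\ref{lem: continuousspillover}), provided we average in a suitable way against the modified super-level family of Lemma~\ref{lemma: modification}.

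Write $\phi(t) = \mu_{G/H}(\widetilde{\pi A_t})$. A direct calculation using normality of $H$ and unimodularity of $G$ shows $f_{A^{-1}}(gH) = f_A(g^{-1}H)$, hence $\pi(A^{-1})_t = (\pi A_t)^{-1}$, and so $(\widetilde{\pi A_t})^{-1}$ is itself a valid $\sigma$-compact modification for the family $(\pi(A^{-1})_t)_{t \geq 0}$. I then apply the Ruzsa triangle inequality in $G/H$ to the three sets $\pi A$, $(\widetilde{\pi A_t})^{-1}$, and $\pi A^{-1}$; noting $\mu_{G/H}((\widetilde{\pi A_t})^{-1}) = \phi(t)$ by unimodularity of $G/H$, this gives, for every $t \geq 0$,
\[
\mu_{G/H}(\pi A^2)\cdot \phi(t) \;\leq\; \psi_1(t)\cdot \psi_2(t),
\]
where $\psi_1(t) = \mu_{G/H}(\pi A \cdot \widetilde{\pi A_t})$ and $\psi_2(t) = \mu_{G/H}((\widetilde{\pi A_t})^{-1}\cdot \pi A)$. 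Taking square roots, integrating in $t$, and applying Cauchy--Schwarz together with both versions of Lemma~\ref{lem: continuousspillover} yields
\[
\sqrt{\mu_{G/H}(\pi A^2)}\,\int_0^\infty \sqrt{\phi(t)}\,dt \;\leq\; \sqrt{\int_0^\infty\psi_1\,dt\cdot \int_0^\infty\psi_2\,dt} \;\leq\; \sqrt{K_1 K_2}\,\mu_G(A),
\]
where I used $\int \psi_1 \leq \mu_G(A^2) \leq K_1\mu_G(A)$ and $\int \psi_2 \leq \mu_G(A^{-1}A) \leq K_2\mu_G(A)$. Finally, since $\phi$ is non-increasing with $\phi(0) = \mu_{G/H}(\pi A)$, the elementary inequality $\sqrt{\phi(t)} \geq \phi(t)/\sqrt{\phi(0)}$ together with Lemma~\ref{lem: level set} gives
\[
\int_0^\infty \sqrt{\phi(t)}\,dt \;\geq\; \frac{\mu_G(A)}{\sqrt{\mu_{G/H}(\pi A)}};
\]
combining the last two displays and squaring proves (iii).

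Part (i) is then immediate, since $A = A^{-1}$ forces $K_1 = K_2 = K$. For part (ii), I apply the Ruzsa triangle inequality in $G$ itself to the three sets $A^{-1}$, $A$, $A^{-1}$: this gives $\mu_G(A^{-1}A)\cdot \mu_G(A) \leq \mu_G(A^2)^2$ (using unimodularity of $G$ to identify $\mu_G(A^{-1}A^{-1}) = \mu_G(A^2)$), hence $\mu_G(A^{-1}A) \leq K^2 \mu_G(A)$, and feeding $K_1 = K$, $K_2 = K^2$ into (iii) delivers the $K^3$ bound. I expect the most delicate point to be verifying that $(\widetilde{\pi A_t})^{-1}$ actually satisfies all four conditions of Lemma~\ref{lemma: modification} for $\pi(A^{-1})_t$, so that the second form of the Spillover Inequality is legitimately applicable to bound $\int \psi_2$ by $\mu_G(A^{-1}A)$; once this identification is secured, the rest of the argument is a clean Ruzsa--plus--Cauchy--Schwarz calculation driven by the monotonicity of $\phi$.
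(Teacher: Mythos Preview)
Your proposal is correct and follows essentially the same architecture as the paper: reduce (i) and (ii) to (iii), and prove (iii) by combining the Spillover Inequality with the Ruzsa triangle inequality in $G/H$ and Cauchy--Schwarz over~$t$. The only real difference is the order of operations. The paper first uses Cauchy--Schwarz to bound $\int \sqrt{\psi_1\psi_2}\,dt \leq \sqrt{K_1K_2}\,\mu_G(A)$, then argues by averaging that some $t$ with $\phi(t)>0$ satisfies $\sqrt{\psi_1(t)\psi_2(t)} \leq \sqrt{K_1K_2}\,\phi(t)$, and finally applies Ruzsa at that single~$t$. You instead apply Ruzsa \emph{pointwise} for every $t$ to get $\sqrt{\mu_{G/H}(\pi A^2)}\,\sqrt{\phi(t)} \leq \sqrt{\psi_1(t)\psi_2(t)}$, integrate, and then use the monotonicity bound $\int\sqrt{\phi}\geq \mu_G(A)/\sqrt{\phi(0)}$ in place of the existence argument. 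Your route is marginally cleaner in that it avoids the proof-by-contradiction step; the paper's route has the small advantage that it isolates a concrete level $t$ at which the Ruzsa distance is controlled, which is in the spirit of the companion Theorem~\ref{thm:structure set c}. The verification you flag as delicate---that $(\widetilde{\pi A_t})^{-1}$ is a legitimate $\sigma$-compact modification for $(\pi(A^{-1})_t)_{t\ge 0}$---is exactly the identity $(\pi A_t)^{-1}=\pi(A^{-1})_t$ that the paper also records; it ultimately rests on the fact that conjugation by elements of $G$ preserves $\mu_H$ when $G$ and $G/H$ are unimodular.
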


\begin{proof}
We first note that (i) and (ii) follow quite easily from (iii). If   $\mu_G(A^2)\le K\mu_G(A)$ and~$A=A^{-1}$, then setting $K_1 = K_2 = K$, we get $\mu_G(\pi A ^2) \leq K^2 \mu_G(\pi A)$. If~${\mu_G(A^2)\le K\mu_G(A)}$ but  $A=A^{-1}$ is not assumed,  we still know that, 
by Ruzsa Triangle inequality,
$$d(A^{-1},A^{-1}) \leq d(A^{-1},A) + d(A, A^{-1}) =2\log(\mu_G(A^2)/\mu_G(A))\le \log K^2,$$
so $\mu_G(A^{-1}A)\leq K^{2}\mu_G(A).$
Hence, setting $K_1= K$ and $K_2=K^2$, we can deduce (ii). 

It is therefore sufficient to prove (iii). We will nevertheless still prove (i) right now, as this would give illustration for our techniques in the simplest fashion.

Let $f_A : G/H \rightarrow \mathbb{R},\ gH \mapsto \mu _H (g^{-1} A \cap H)$ be the fiber length function of $A$, let $$\pi A_t:= \{ gH : f_A(gH) \geq t \} \text{ for } t \geq 0, $$ and let $( \widetilde{\pi A_t})_{t \geq 0}$ be a $\sigma$-compact modification of $(A_t)_{t\geq 0}$ obtained using Lemma~\ref{lemma: modification}. (The reader willing to ignore measurability issues can pretend that $\widetilde{\pi A_t} = \pi A_t$ for all $t\geq 0$.)
By the spillover inequality (Lemma~\ref{lem: continuousspillover}), we have
\begin{equation} \label{eq: fromspillover}
    \int_{t\geq 0} \mu_{G/H} (\pi A \widetilde{\pi A_t}) d t \leq \mu_G (A^2) \leq K \mu_G(A).
\end{equation}

We now claim that there exists some $t \geq 0$ such that   
\begin{equation} \label{eq: structure}
\mu_{G/H}(\widetilde{\pi A_t}) \neq 0 \quad \text{and} \quad
   \mu_{G/H} (\pi A \widetilde{\pi A_t}) \leq K \sqrt{\mu_{G/H} (\pi A) \mu_{G/H}( \widetilde{ \pi A_t})}.
\end{equation} 
Suppose not. Then  $\mu_{G/H} (\pi A \widetilde{ \pi A_t}) > K \sqrt{\mu_{G/H} (\pi A) \mu_{G/H}( \widetilde{\pi A_t})}$ for all $t\geq 0$ such that $\mu_{G/H}(\widetilde{\pi A_t})\neq0$. By Lemma~\ref{lem: level set} and conditions (iii) and (iv) of Lemma~\ref{lemma: modification},
\begin{align*} 
0<K\mu_G (A) &= \int_{\substack{t \geq 0\\\mu_{G/H}(\pi A_t)\neq 0}} K \mu_{G/H} (\pi A_t) d t \label{eq:1}\\
&= \int_{\substack{t \geq 0\\\mu_{G/H}(\widetilde{\pi A_t})\neq 0}} K \mu_{G/H} (\widetilde{\pi A_t}) dt 
\\
&\le \int_{\substack{t\geq 0 \\\mu_{G/H}(\widetilde{\pi A_t})\neq 0}} K \sqrt{\mu_{G/H} (\pi A) \mu_{G/H}( \widetilde{\pi A_t})}dt 
\\
&< \int_{\substack{t\geq 0\\\mu_{G/H}(\widetilde{\pi A_t})\neq 0}} \mu_{G/H} (\pi A \widetilde{\pi A_t}) dt 
\\
&\leq \int_{t\geq 0} \mu_{G/H} (\pi A \widetilde{\pi A_t}) dt, 
\end{align*}
which contradicts inequality~\eqref{eq: fromspillover}.

Hence, we can obtain $t\geq 0$ where condition~\eqref{eq: structure} holds. This implies $$d(\pi A,\widetilde{\pi A_t}^{-1}) \leq \log K.$$  Recall that  $\widetilde{\pi A_t}$ is symmetric, as arranged in Lemma~\ref{lemma: modification}, so we have  $d(\pi A, \widetilde{\pi A_t}) \leq \log K$.
Now applying Ruzsa Triangle Inequality we have 
$$d(\pi A, \pi A) \leq d(\pi A, \widetilde{\pi A_t}) + d(\widetilde{\pi A_t}, \pi A) \leq 2\log K = \log K^2.$$
Since $\pi A$ is symmetric, this implies $\mu(\pi A^2) \leq K^2 \mu(\pi A)$.

We now prove (iii).  Let $f_A$, $f_{A^{-1}}$, $\pi A_t$, and $\pi (A^{-1})_t$ be defined as before. 
We note that $$(\pi A_t)^{-1}=\pi (A_t)^{-1}=\{g^{-1}H:f_A(gH)\ge t\}=\{gH:f_{A^{-1}}(gH)\ge t\}=\pi (A^{-1})_t,$$so we can use $\pi A_t^{-1}$ throughout the proof without ambiguities. We will pretend that all the sets involved are measurable.
The fact that we do not necessarily have this is handled in a similar fashion to what we did in (i) (e.g., by replacing $\pi A_t$ with $\widetilde{\pi A_t}$ and $\pi A_t^{-1}$ with~$(\widetilde{\pi A_t})^{-1}$). 
We will skip a number of other details to avoid repetitiveness and drop the group subscript for improved readability as it can be understood from the context.

By the spillover inequality (Lemma~\ref{lem: continuousspillover}), we have the estimates 
\eq{ \int_{t \geq 0} \mu (\pi A \pi A_t) d t &\leq \mu (A^2) \le K_1 \mu(A)\quad\text{and}\\
\int_{t \geq 0} \mu (\pi A_t^{-1}\pi A) d t &\leq \mu (A^{-1}A) \le K_2 \mu(A),}
where both integrals exist by monotonicity. For $t\ge0$, let 
$$g(t) = \sqrt{\mu (\pi A \pi A_t) \mu (\pi A^{-1}_t\pi A)}.$$
By Hölder's inequality,
\begin{equation} \label{eq:fromspillover2}
    \int_{t\geq 0} g(t) d t \leq \sqrt{\int_{t\geq 0} \mu (\pi A \pi A_t) d t \int_{t\geq 0} \mu (\pi A^{-1}_t\pi A) d t}
    \le\sqrt{K_1 K_2} \mu(A).
\end{equation}

We claim that there exists $t\geq 0$ such that $\mu(\pi A_t) \neq 0$ and $g(t) \leq \sqrt{K_1 K_2}\mu(\pi A_t)$. Suppose not; then
$g(t) > \sqrt{K_1 K_2}\mu(\pi A_t)$ for all $t\ge0$ such that $\mu(\pi A_t)\neq0$. By Lemma~\ref{lem: level set},
\begin{align*}
\sqrt{K_1 K_2} \mu(A) = \int_{\substack{t \geq 0\\\mu(\pi A_t)\neq0}} \sqrt{K_1 K_2}\mu(\pi A_t) d t<\int_{\substack{t \geq 0\\\mu(\pi A_t)\neq0}} g(t) d t,
\end{align*}
contradicting inequality~\eqref{eq:fromspillover2}. Hence, we obtain $t$ with $\mu (\pi A_t)\neq 0$ and

$$\sqrt{\mu (\pi A \pi A_t) \mu (\pi A^{-1}_t\pi A)} = g(t) \leq \sqrt{K_1 K_2}\mu(\pi A_t).$$
Taking the natural log on both sides and using Ruzsa Triangle Inequality, we have
\begin{align*}
\log(K_1 K_2) 
& \geq \log \left( \frac{\mu (\pi A \pi A_t)}{\mu(\pi A_t)} \right) + \log \left( \frac{\mu (\pi A_t ^{-1} \pi A)}{\mu(\pi A_t)} \right)\\
& = d(\pi A, \pi A_t ^{-1}) + d(\pi A_t ^{-1}, \pi A^{-1}) \\
& \geq d(\pi A, \pi A^{-1}).
\end{align*}
In other words, $\mu(\pi A ^2) \leq K_1 K_2 \mu(\pi A)$, as desired. 
\end{proof}

 Some readers might think that measurability issues serve nothing but distraction to the central ideas in the above proof. Some may also feel uncomfortable with the use of Axiom of Choice in Lemma~\ref{lemma: modification}. The following remark is for such readers. This remark is also applicable to Theorem~\ref{thm:structure set c}.

\begin{remark} \label{rem: settheory tricks}
Solovay showed assuming ZF (the Zermelo-Fraenkel system of axioms for set theory) and the existence of an inaccessible cardinal that there is a universe of set theory (a model of ZF) where all subsets of $\RR$ are Lebesgue measurable~\cite{Solovay}. The assumption of inaccessible cardinal cannot be removed, but Krivine showed only using ZF that there is a model of ZF where the Lebesgue measure can be extended to an invariant measure on all subsets of reals (without nice properties like inner and outer regularity)~\cite{Krivine}. Even though stated only for subsets of reals, it is well known that their result holds for all Polish space and hence applies also to locally compact groups. In these universes of set theory, the proof of our theorem can proceed without replacing $\pi A_t$ with $\widetilde{\pi A_t}$.\newpage

   Another result in set theory called Shoenfield Absoluteness~\cite{Shoenfield} can be used to show  that Theorem~\ref{thm: main}, if holds in some model of ZF, also holds in others, which establishes our result. To carry that out, one first uses the Gleason--Yamabe theorem (Hilbert 5th Problem)~\cite{Gleason} to reduce the statement to Lie groups. Then one shows by standard techniques that our theorem is sufficiently simple (either $\mathbf{\Pi}^2_1$ or $\mathbf{\Sigma}^2_1$ in the so-called analytical hierarchy). The fact that such statement is true in one model of ZF if and only if it is true in another is the content of Shoenfield Absoluteness Theorem. This technique also shows Theorem~\ref{thm: main} is true regardless of the Axiom of Choice, even though it is used in our proof.

\end{remark}

\section{Subsets with linear doubling in the quotient}

We now prove a slight generalization of Theorem~\ref{thm:structure set}. In particular, we allow $A$ to be $\sigma$-compact, and also allow more flexible constants. By taking $\alpha=2$, this implies Theorem~\ref{thm:structure set}.

\begin{theorem} \label{thm:structure set c}
Let $G$ be a unimodular group with Haar measure $\mu_G$. Let $A$ be a $\sigma$-compact subset of $G$ of positive measure such that $\mu_G(A^2) \leq K\mu_G (A)$. Let $H$ be a closed normal subgroup of G and $\pi: G \rightarrow G/H$ be the quotient map. Then, for all $\alpha >1$, there exists a compact subset $B \subseteq A$ with $\mu_G(B) > \frac{\alpha-1}{\alpha} \mu_G(A)$ such that $\mu_{G/H}(\pi B^2) < \alpha K \mu_{G/H}( \pi B)$.   
\end{theorem}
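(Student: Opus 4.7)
The plan is to apply the spillover inequality (Lemma~\ref{lem: continuousspillover}) to $A$ with itself, locate a level $t$ at which $\mu_{G/H}(\pi A\cdot\widetilde{\pi A_t})$ is small relative to $\mu_{G/H}(\widetilde{\pi A_t})$, and take $B$ to be a compact approximation of $B_t := A\cap\pi^{-1}(\widetilde{\pi A_t})$. Here $(\widetilde{\pi A_t})_{t\geq 0}$ is the $\sigma$-compact modification of the superlevel sets of $f_A$ from Lemma~\ref{lemma: modification}. Because $\widetilde{\pi A_t}\subseteq\pi A$, we will have $\pi B_t=\widetilde{\pi A_t}$ and $\pi B_t^2\subseteq\pi A\cdot\widetilde{\pi A_t}$, so $\mu_{G/H}(\pi B_t^2)\leq\mu_{G/H}(\pi A\widetilde{\pi A_t})$. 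A layer-cake computation, combined with property~(iv) of Lemma~\ref{lemma: modification}, yields
$$\mu_G(B_t) = t\,\mu_{G/H}(\widetilde{\pi A_t}) + \int_t^\infty\mu_{G/H}(\pi A_s)\,ds,$$
so $t\mapsto\mu_G(B_t)$ is nonincreasing with $\mu_G(B_t)\to\mu_G(A)$ as $t\to 0^+$ and $\mu_G(B_t)\to 0$ as $t\to\infty$.

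Call $t$ \emph{good} if $\mu_{G/H}(\pi A\widetilde{\pi A_t})<\alpha K\,\mu_{G/H}(\widetilde{\pi A_t})$ and \emph{bad} otherwise. The spillover inequality $\int_0^\infty\mu_{G/H}(\pi A\widetilde{\pi A_t})\,dt\leq K\mu_G(A)$ and a Markov-type estimate give
$$\int_{\{t\text{ bad}\}}\mu_{G/H}(\widetilde{\pi A_t})\,dt \leq \frac{\mu_G(A)}{\alpha}.$$
Set $T=\sup\{t\geq 0:\mu_G(B_t)>\tfrac{\alpha-1}{\alpha}\mu_G(A)\}$, which is positive. We claim there exists a good $t\in[0,T)$. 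Otherwise every $t\in[0,T)$ is bad, and the a.e.\ equality $\mu_{G/H}(\widetilde{\pi A_s})=\mu_{G/H}(\pi A_s)$ (itself a consequence of property~(iv) together with the monotonicity of $s\mapsto\mu_{G/H}(\pi A_s)$) gives $\int_0^T\mu_{G/H}(\pi A_s)\,ds\leq\mu_G(A)/\alpha$, hence $\int_T^\infty\mu_{G/H}(\pi A_s)\,ds\geq\tfrac{\alpha-1}{\alpha}\mu_G(A)$. On the other hand, taking $t\to T^+$ in the layer-cake formula for $\mu_G(B_t)$, together with $\mu_G(B_t)\leq\tfrac{\alpha-1}{\alpha}\mu_G(A)$ for $t>T$, forces $T\mu_{G/H}(\widetilde{\pi A_{T^+}})+\int_T^\infty\mu_{G/H}(\pi A_s)\,ds\leq\tfrac{\alpha-1}{\alpha}\mu_G(A)$, so $\mu_{G/H}(\widetilde{\pi A_{T^+}})=0$; this in turn forces $\mu_{G/H}(\pi A_s)=0$ for all $s>T$ and therefore $\int_T^\infty\mu_{G/H}(\pi A_s)\,ds=0$, a contradiction.

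For such a good $t\in[0,T)$ we have $\mu_G(B_t)>\tfrac{\alpha-1}{\alpha}\mu_G(A)$ and $\mu_{G/H}(\pi B_t^2)<\alpha K\,\mu_{G/H}(\pi B_t)$. Since $A$ is $\sigma$-compact and $\widetilde{\pi A_t}$ is $\sigma$-compact, so is $B_t$; writing $B_t=\bigcup_n C_n$ with $C_n$ compact and increasing, we get $\mu_G(C_n)\to\mu_G(B_t)$, $\mu_{G/H}(\pi C_n)\to\mu_{G/H}(\pi B_t)$, and $\mu_{G/H}(\pi C_n^2)\to\mu_{G/H}(\pi B_t^2)$, so the strict ratio inequality $\mu_{G/H}(\pi B_t^2)/\mu_{G/H}(\pi B_t)<\alpha K$ passes to $C_n$ for all sufficiently large $n$, and we set $B=C_n$. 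The main technical obstacle is the bookkeeping with $\widetilde{\pi A_t}$ versus $\pi A_t$, particularly verifying the semicontinuity of $\mu_G(B_t)$ needed to take the limit $t\to T^+$ in the layer-cake formula.
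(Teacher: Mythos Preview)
Your proposal is correct and follows essentially the same route as the paper: apply the spillover inequality, work with the superlevel sets $\pi A_t$ (or their $\sigma$-compact modifications), and take $B$ to be a compact approximation of $A\cap\pi^{-1}(\widetilde{\pi A_t})$ for a well-chosen level $t$. The paper organizes the selection of $t$ slightly differently---it defines the ``good'' set $S=\{s:\mu(\pi A_s)\neq 0,\ \mu(\pi A_s^2)<\alpha K\mu(\pi A_s)\}$ using the diagonal product $\pi A_s^2$ rather than $\pi A\cdot\widetilde{\pi A_s}$, shows $S$ has positive measure, and then splits into the cases $\inf S=0$ and $\inf S>0$, each handled by a direct layer-cake computation---which sidesteps your limiting argument at $T$. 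Your flagged semicontinuity concern does work out: since $L:=\lim_{t\to T^+}\mu_{G/H}(\widetilde{\pi A_t})$ equals $\lim_{s\to T^+}\mu_{G/H}(\pi A_s)$ by property~(iv) and monotonicity, the inequality $TL\le 0$ with $T>0$ forces $\mu_{G/H}(\pi A_s)=0$ for all $s>T$, hence $\int_T^\infty\mu_{G/H}(\pi A_s)\,ds=0$, contradicting $\int_T^\infty\ge\frac{\alpha-1}{\alpha}\mu_G(A)>0$.
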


\begin{proof}
Let $f_A$ be the fiber length function of $A$ and, as before, set $$\pi A_t:= \{ gH : f_A(gH) \geq t \} \text{ for } t \geq 0.$$ 
We will pretend that all the sets involved are measurable.
When this is not true, we handle it using $\sigma$-compact modifications as in the proof of (i) of Theorem~\ref{thm: main}. 
As in the proof of (iii) of Theorem~\ref{thm: main}, we will skip a number of other details to avoid repetitiveness and drop the group subscript  as it can be understood from the context.

By the Spillover Inequality (Lemma~\ref{lem: continuousspillover}), we have
\begin{equation} \label{eq: diagonalspillover}
    \int_{t \geq 0} \mu (\pi A_t \pi A_t)  d t \leq \int_{t \geq 0} \mu (\pi A \pi A_t) d t  \leq \mu (A^2) \leq K \mu(A).
\end{equation}

We claim that there exists a $t \geq 0$ such that $\mu(\pi A_t) \neq 0$ and  $\mu (\pi A_t \pi A_t) \leq K\mu(\pi A_t)$.  Suppose not, then 
we have
\begin{align*} K\mu(A)=\int_{\substack{t \geq 0\\\mu(\pi A_t)\neq 0}} K \mu (\pi A_t) d t 
<\int_{t \geq 0} \mu (\pi A_t \pi A_t) d t,
\end{align*}
a contradiction. In fact, the same argument shows that the set of such $t$ must have positive measure.
Hence, we know that the larger set 
$$S=\{ s \geq 0:    \mu(A_s) \neq 0  \text{ and } \mu(\pi A_s \pi A_s) < \alpha K \mu(\pi A_s) \} $$
is of positive measure.

Consider first the case where $\inf(S) = 0$. For $t \geq 0$, we set 
$$A_t = \pi^{-1} (\pi A_{t}) \cap A.$$
Note that $\sup_{s\in S} \mu(A_s) = \sup_{t>0} \mu(A_t) = \mu(A)$. Hence, one can choose $s \in S$ such that $\mu(A_s)>(\alpha-1)\mu(A)/\alpha$. Since $A_s$ is measurable, one can use the inner regularity property of Haar measure to choose compact $B \subseteq A_s$ such that the desired conditions are satisfied. 

From now on, we assume $\inf S >0$. By inequality~\eqref{eq: diagonalspillover}, we have 
\begin{align*}
K \mu(A) &\geq \int_{0}^{\inf(S)} \mu (\pi A_t \pi A_t) d t + \int_{\inf(S)}^{\infty} \mu (\pi A_t \pi A_t) d t \\
& > \int_{0}^{\inf(S)} \mu (\pi A_t \pi A_t) d t  \\
& \geq \alpha K \int_{0}^{\inf(S)}  \mu (\pi A_t) d t.
\end{align*}
The strictness of the second inequality comes from the fact
that $S$ has positive measure, and every $s \in S$ satisfies $\mu(\pi A_s)>0$ (and hence $\mu(\pi A_s\pi A_s)>0$). 
By continuity of integrals, 
$$\lim_{s \to \inf(S)^+} \int_0^s  \mu (\pi A_t) d t = \int_0^{\inf(S)}  \mu (\pi A_t) d t.$$
Therefore, one can find $s \in S$ such that 
\begin{align*}
    K \mu(A) &> \alpha K \int_{0}^{s}  \mu (\pi A_t) d t  \\ &=\alpha K(\mu(A)-\mu(A_s)+s\mu(\pi A_s))\\&>  \alpha K(\mu(A) - \mu(A_s)),
\end{align*}
where the equality follows from Lemma~\ref{lem: level set}. It follows that $\mu(A_s)> (\alpha-1) \mu(A)/\alpha$ and $\mu(\pi A^2_s) < \alpha K \mu(\pi A_s)$. Finally, we approximate $A_s$ from inside by a compact set using inner regularity to get the desired $B$.
\end{proof}

\section*{Acknowledgements}
The authors would like to thank Jun Le Goh, Yifan Jing, and Dilip Raghavan for their helpful comments and discussions.

\bibliographystyle{amsalpha}
\bibliography{ref}

\end{document}